\newtheorem{lemma}{Lemma}
\theoremstyle{definition}
\newtheorem{remark}{Remark}
\newcolumntype{Y}{>{\small\center\arraybackslash}X}
\newcommand\nc{\newcommand}
\nc\ynm{Y_{nm}}
\nc\bynm{{\mathbf{Y}}_{nm}}
\nc\psinm{{\mathbf{\Psi}}_{nm}}
\nc\phinm{{\mathbf{\Phi}}_{nm}}
\nc\fnm{f_{nm}}
\nc\gnm{g_{nm}}
\nc\unm{u_{nm}}
\nc\vnm{v_{nm}}
\nc\hfnm{\hat{f}_{nm}}
\nc\hgnm{\hat{g}_{nm}}
\nc\hunm{\hat{u}_{nm}}
\nc\hvnm{\hat{v}_{nm}}
\nc\anj{a_{n,j}}
\nc\bnj{b_{n,j}}
\nc\alnj{\alpha_{n,j}}
\nc\alnk{\alpha_{n,k}}
\nc\benj{\beta_{n,j}}
\nc\benk{\beta_{n,k}}
\nc\ba{{\mathbf{A}}}
\nc\be{{\mathbf{E}}}
\nc\bh{{\mathbf{H}}}
\nc\bn{{\mathbf{n}}}
\nc\br{{\mathbf{r}}}
\nc\rhat{\hat{\br}}
\nc\curl{\nabla\times}
\nc\dive{\nabla\cdot}
\nc\pt{\frac{\partial}{\partial t}}
\nc\pr{\frac{\partial}{\partial r}}
\nc\sumnm{\sum_{n=0}^\infty\sum_{m=-n}^n}
\nc\sumon{\sum_{n=0}^\infty}
\nc\sumom{\sum_{m=-n}^n} 
\nc\prodj{\prod_{j=1}^n}
\nc\prok{\prod_{k=1}^n}
\nc\prokj{\prod_{k=1,k\neq j}^n}
\nc\sumj{\sum_{j=1}^n}
\nc\cnhat{\hat{C}_n}
\begin{document}

\begin{frontmatter}

\title{The solution of the scalar wave equation in the 
exterior of a sphere} 
\author[lg]{Leslie Greengard\fnref{fnlg}}
\address[lg]{Courant Institute of Mathematical Sciences, New York University,
New York, NY 10012.}
\ead{greengard@courant.nyu.edu}
\fntext[fnlg]{Supported in part by 
the NSSEFF Program under AFOSR Award FA9550-10-1-0180 and
in part by the Department of Energy under contract DEFG0288ER25053.}
\author[th]{Thomas Hagstrom\fnref{fnth}}
\address[th]{Department of Mathematics, Southern Methodist University,
PO Box 750156, Dallas, TX 75275.}
\ead{thagstrom@smu.edu}
\fntext[fnth]{Supported in part by the Army Research Office under 
agreement W911NF-09-1-0344 and by the NSF under grant OCI-0904773.}
\author[sj]{Shidong Jiang\corref{cor1}\fnref{fn1}}
\address[sj]{Department of Mathematical Sciences, New Jersey Institute of
Technology, Newark, New Jersey 07102.}
\ead{shidong.jiang@njit.edu}
\cortext[cor1]{Corresponding author.}
\fntext[fn1]{Supported in part by NSF under grant CCF-0905395.}

\begin{abstract}
We derive new, explicit representations for the  
solution to the scalar wave equation in the exterior
of a sphere, subject to either Dirichlet or Robin boundary conditions.
Our formula leads to a stable and high-order 
numerical scheme that permits the evaluation of
the solution at an arbitrary target, without the use of a spatial
grid and without numerical dispersion error. 
In the process, we correct some errors in the analytic literature
concerning the asymptotic behavior of the logarithmic derivative
of the spherical modified Hankel function. 
We illustrate the performance of the method with several numerical
examples.
\end{abstract}

\begin{keyword}

\MSC 65M70 \sep 78A40 \sep 78M16
\end{keyword}

\end{frontmatter}

\section{Introduction}

In this paper, we consider a simple problem, namely the solution
of the scalar wave equation
\begin{equation}\label{eq2.1}
u_{tt}=\Delta u, \quad t>0,
\end{equation}
subject to homogeneous initial conditions
\begin{equation}\label{eq2.2}
u(r,\theta,\phi,0)=0, \qquad u_t(r,\theta,\phi,0)=0 
\end{equation}
in the exterior of the unit sphere. Here, $(r,\theta,\phi)$ denote the
spherical coordinates of a point in ${\mathbb R}^3$ with $r>1$. 
Standard textbooks 
on mathematical physics (such as \cite{CH,MF}) 
present exact solutions
for the time-harmonic cases governed by the 
Helmholtz equation, but generally fail to discuss 
the difficulties associated with the 
fully time-dependent case \eqref{eq2.1}.
As we shall see, it is a nontrivial matter to develop 
closed-form solutions, and a surprisingly subtle matter to develop solutions 
that can be computed without catastrophic cancellation.

In this paper, we restrict our attention to 
boundary value problems with Dirichlet or Robin conditions.
We consider the Dirichlet problem first, and assume we are given 
data on the boundary of the unit sphere of the form:
\begin{equation}\label{eq2.3}
u(1,\theta,\phi,t)=f(\theta,\phi,t).
\end{equation}
It is natural to begin by expanding both 
$u$ and $f$ in terms of spherical harmonics. 
\begin{equation}\label{eq2.4}
\begin{aligned}
u(r,\theta,\phi,t)&=\sumnm \unm(r,t) \ynm(\theta,\phi),\\
f(\theta,\phi,t)&=\sumnm \fnm(t) \ynm(\theta,\phi),
\end{aligned}
\end{equation}
where 
\begin{equation}
Y_n^m(\theta,\phi) =
\sqrt{\frac{2n+1}{4 \pi}}
\sqrt{\frac{(n-|m|)!}{(n+|m|)!}}\, P_n^{|m|}(\cos \theta)
		      e^{i m \phi} \, ,
\label{ynmpnm}
\end{equation}
$P_n(x)$ is the standard Legendre polynomial of degree $n$,
and the associated Legendre functions $P_n^m$ are
defined by the Rodrigues' formula
\[ P_n^m(x) = (-1)^m (1-x^2)^{m/2} \frac{d^m}{dx^m} P_n(x). \] 

We let $\hunm(r,s)$ and $\hfnm(s)$ denote the Laplace transforms of 
$\unm(r,t)$ and $\fnm(t)$:
\begin{equation}\label{eq2.5}
\hunm(r,s)=\int_0^\infty e^{-st}\unm(r,t)dt,
\end{equation}
\begin{equation}\label{eq2.6}
\hfnm(s)=\int_0^\infty e^{-st}\fnm(t)dt.
\end{equation}
It is straightforward \cite{handbook} to see that $\hunm(r,s)$ satisfies the
linear second order ordinary differential equation (ODE)
\[ r^2 \hunm(r,s)_{rr} + 2r \hunm(r,s)_r - [s^2r^2 + n(n+1)] \hunm(r,s) = 0,
\]
for which the decaying solution as $r \rightarrow \infty$ is 
the modified spherical Hankel function $k_n(sr)$.
It follows that 
\[ \hunm(r,s)=c_{nm}(s)k_n(sr).  \]
Matching boundary data on the unit sphere, 
we have $c_{nm}(s)=\hfnm(s)/k_n(s)$, and 
\begin{equation}\label{eq2.7}
\hunm(r,s)=\frac{k_n(sr)}{k_n(s)}\hfnm(s).
\end{equation}

The remaining difficulty is that we have an explicit solution in the 
Laplace transform domain, but we seek the solution in the time domain.
For this, we write
the right hand side of \eqref{eq2.7} in a form for which
the inverse Laplace transform can carried out analytically.
First, from \cite{handbook,jiang1,olver}, we have
\begin{equation}\label{eq2.8}
k_n(z)=\frac{p_n(z)}{z^{n+1}}e^{-z}=\frac{\prodj (z-\alnj)}{z^{n+1}}e^{-z},
\end{equation}
where $\alnj$ ($j=1,\cdots,n$) are the simple roots of $k_n$ 
lying on the open left half of the complex plane.
Thus,
\begin{equation}\label{eq2.9}
\begin{aligned}
\frac{k_n(sr)}{k_n(s)}&=\frac{1}{r}e^{-s(r-1)}
\prodj \frac{s-\frac{1}{r}\alnj}{s-\alnj}\\
&=\frac{1}{r}e^{-s(r-1)}\left(1+\sumj \frac{\anj(r)}{s-\alnj}\right),
\end{aligned}
\end{equation}
where the second equality follows from an expansion using partial fractions 
and the coefficients $\anj$ are given from the residue theorem by the formula: 
\begin{equation}\label{eq2.10}
\begin{aligned}
\anj(r)&=\frac{\prok(\alnj-\frac{1}{r}\alnk)}{\prokj (\alnj-\alnk)}\\
&=\frac{p_n(\alnj r)}{r^n p'_n(\alnj)}\\
&=re^{\alnj (r-1)}\frac{k_n(\alnj r)}{k'_n(\alnj)}, \qquad j=1,\cdots,n.
\end{aligned}
\end{equation}
Substituting \eqref{eq2.9} into \eqref{eq2.7}, we obtain
\begin{equation}\label{eq2.11}
\hunm(r,s)=\frac{1}{r}\left(1+\sumj \frac{\anj(r)}{s-\alnj}\right)
(e^{-s(r-1)}\hfnm(s)).
\end{equation}
Taking the inverse Laplace transform of both sides, we have
\begin{equation}\label{eq2.12}
\unm(r,t)=\frac{1}{r}\left(\fnm(t-r+1)+\sumj 
\anj(r) \int_0^{t-r+1}
e^{\alnj (t-r+1-\tau)}\fnm(\tau)d\tau\right).
\end{equation}
This involves the use of the convolution theorem and the formulas
$\mathcal{L}^{-1}\left(\frac{1}{s-\alpha}\right)=e^{\alpha t}$ and
$\mathcal{L}^{-1}\left(e^{-s(r-1)}\hfnm(s)\right)=\fnm(t-r+1)H(t-r+1)$, 
where $H$ is the Heaviside function.

\begin{remark}
Wilcox \cite{wilcox} studied the solution of the scalar wave equation
and derived formula \eqref{eq2.12} in 1959.
In that short note, Wilcox stated that the coefficients 
$\anj$ given by \eqref{eq2.10} grew slowly based on the claim that 
$\frac{k_n(\alnj)}{k_n'(\alnj)}=O(n^{1/2})$ as $n\rightarrow \infty$. 
Unfortunately, this estimate is incorrect. In fact,
even after multiplication by the exponentially decaying factor $e^{\alnj(r-1)}$,
the coefficients $\anj$ ($j=1,\cdots,n$) grow exponentially fast
as $n\rightarrow \infty$. In the next section, we explain this growth 
in detail. As a result, even though
\eqref{eq2.12} is very convenient for the purpose of theoretical studies, 
it cannot be used for numerical calculation due to 
catastrophe cancellation in carrying out the summation. 
\end{remark}

\begin{remark}
Benedict, Field and Lau \cite{BFL-QG} have recently developed algorithms for compressing 
the kernel, which they call the teleportation kernel, arising in sphere-to-sphere propagation
of data both for the standard wave equation as well as wave equations arising in linearized
gravitational theories. For the wave equation their compressed kernels can be
used to perform the same function as our
solution of the Dirichlet problem. The largest value of $n$ considered in \cite{BFL-QG}   
is $64$. It is as yet unclear if useful compressions for much larger values of $n$ can be
constructed using their methods.
\end{remark} 
\subsection{Asymptotic growth of the logarithmic derivative
of the spherical modified Bessel function}

We first show that the coefficients $\anj$ ($j=1,\cdots,n$) defined in 
\eqref{eq2.10} grow exponentially 
as $n\rightarrow \infty$, for fixed large $r$.
Indeed, Lemma \ref{lem6.1} in Section \ref{sec:num} shows that the zeros $\alnj$ of
$k_n$ satisfy the estimates: 
$|\alnj| \sim O(n)$ for all $j$ and 
$|\alnj-\alnk| \propto |j-k|$. Thus when $r$ is large, we have
\begin{equation}\label{eq2.21}
\begin{aligned}
\max_j |\anj(r)| &= \max_j \left|\frac{\prok(\alnj-\frac{1}{r}\alnk)}
{\prokj (\alnj-\alnk)}\right|\\
&\sim \frac{n^n}{n!}\\
&\sim e^n,
\end{aligned}
\end{equation}
where the last line follows from Stirling's formula
$n!\sim \sqrt{2\pi n}\left(\frac{n}{e}\right)^n$.
We have computed $\max_j|\anj|$ for $n=1,\cdots,200$ using 
\eqref{eq2.10}, and plotted them
in Figure \ref{fig2.1} for $r=2$, clearly exhibiting
the exponential growth of $\max_j |\anj|$.
We also plot $|\anj(r)|$ as a function of $j$ for a fixed value of $n$
in Fig. \ref{fig2.2}.

\begin{figure}[!ht]
\centerline{\includegraphics[trim=120 240 120 220, clip, height=2.5in]
{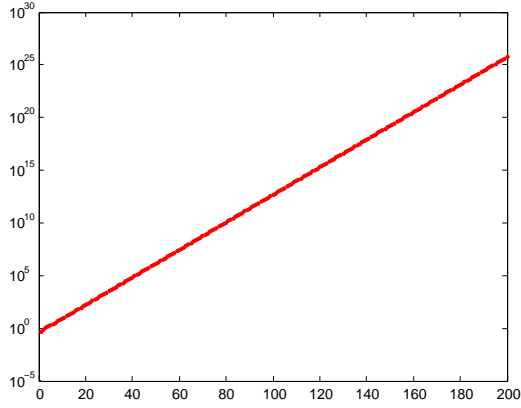}} 
\caption{The function \emph{$\max_j|\anj(r)|$} for increasing values of
$n$, with $r=2$. 
} \label{fig2.1}
\end{figure}

\begin{figure}[!ht]
\centerline{\includegraphics[trim=120 240 120 220, clip, height=2.5in]
{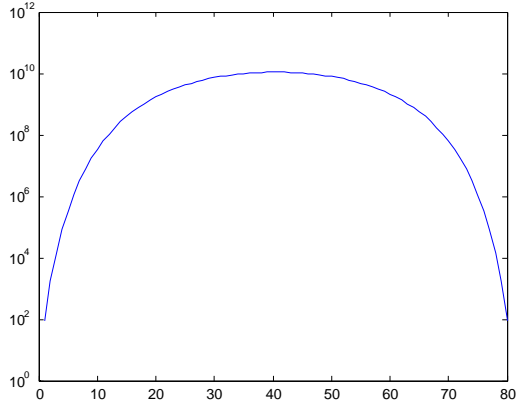}} 
\caption{A plot of $|\anj(r)|$ as a function of $j$, 
for $n=80$ and $r=2$.
} \label{fig2.2}
\end{figure}

From the preceding analysis, it is clear that one cannot use \eqref{eq2.12}
as stated, since the desired solution 
is $O(1)$ and catastrophic cancellation
will occur in computing $u(r,\theta,\phi,t)$ 
from exponentially large intermediate quantities.

Fortunately, even though $\max_j|\anj(r)|$ 
grows exponentially as $n$ increases, 
we can rewrite \eqref{eq2.12} in the form of a convolution, which involves much more 
benign growth: 
\begin{equation}\label{eq2.12.1}
\unm(r,t)=\frac{1}{r}\left(\fnm(t-r+1)+
\int_0^{t-r+1}C_n(r,t-r+1-\tau)\fnm(\tau)d\tau\right),
\end{equation}
where the convolution kernel $C_n$ is defined by the formula
\begin{equation}\label{eq2.14}
C_n(r,t)=\sumj \anj(r)e^{\alnj t}.
\end{equation}
If we write
\begin{equation}\label{eq2.15}
C_n(r,t)=\mathcal{L}^{-1}\left(\cnhat(r,s)\right),
\end{equation}
then from \eqref{eq2.10}, we have
\begin{equation}\label{eq2.16}
\begin{aligned}
\cnhat(r,s)&=\prodj \frac{s-\frac{1}{r}\alnj}{s-\alnj}-1
=\sumj \frac{\anj(r)}{s-\alnj}\\
&=re^{s(r-1)}\frac{k_n(sr)}{k_n(s)}-1\\
&=\sqrt{r}e^{s(r-1)}\frac{K_{n+1/2}(sr)}{K_{n+1/2}(s)}-1,
\end{aligned}
\end{equation}
where $K_{n+1/2}$ 
is the modified Bessel function of the second kind. The last expression follows from
the fact that $k_n(z)=\sqrt{\frac{2}{\pi z}}K_{n+\frac{1}{2}}(z)$.

The convolution kernel $C_n(r,t)$ and its Laplace transform $\cnhat(r,s)$ 
are plotted in Figs. \ref{fig2.3} and \ref{fig2.4}, respectively. 
\begin{figure}[!ht]
\centerline{\includegraphics[trim=120 240 120 220, clip, height=2.5in]
{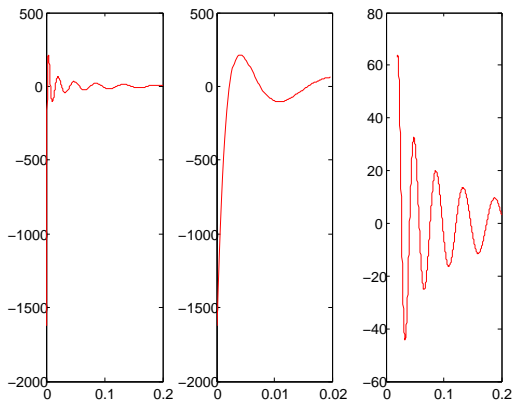}} 
\caption{\emph{The convolution kernel $C_n(r,t)$ as a function of $t$ for
$n=80$ and $r=2$. The left-hand plot shows $C_n(r,t)$ for $t\in [0,0.2]$,
the middle plot shows the same function on $[0,0.02]$, and the 
right-hand plot shows the function on $[0.02,0.2]$.}} 
\label{fig2.3}
\end{figure}
\begin{figure}[!ht]
\centerline{\includegraphics[trim=120 240 120 220, clip, height=2.5in]
{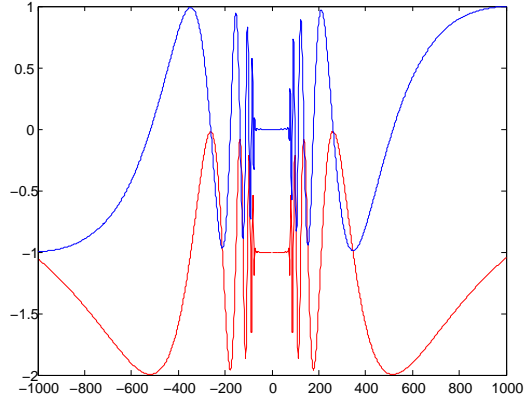}} 
\caption{The Laplace transform $\cnhat(r,s)$ of the convolution kernel 
is plotted on the imaginary axis over the range $[-1000i,1000i]$ 
for $n=80$ and $r=2$. The red (lower) curve corresponds to the real part
of $\cnhat(r,s)$ and
the blue (upper) curve corresponds to its imaginary part.} 
\label{fig2.4}
\end{figure}

The following lemma shows that the convolution kernel grows only quadratically
as a function of $n$ at $t=0$. 
Numerical experiments
(see Fig. \ref{fig2.3}) suggest that $C_n(r,t)$ is maximal in magnitude at $t=0$.
Thus, while the sum of exponential expression (\ref{eq2.14}) involves catastrophic
cancellation, the function $C_n(r,t)$ is, itself, well-behaved and we may seek
an alternative method for the evaluation of the convolution integral.

\begin{lemma}
Let $r>1$. Then
\begin{equation}\label{eq2.17}
C_n(r,0)=\sumj \anj(r)=\frac{n(n+1)}{2}\left(\frac{1}{r}-1\right).
\end{equation}
\end{lemma}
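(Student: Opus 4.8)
The plan is to extract the value $C_n(r,0)=\sumj \anj(r)$ from the large-$s$ behaviour of its Laplace transform $\cnhat(r,s)$, using the two expressions for $\cnhat$ in \eqref{eq2.16}: the partial-fraction sum and the finite product. Setting $t=0$ in \eqref{eq2.14} immediately gives $C_n(r,0)=\sumj \anj(r)$, so it suffices to evaluate this sum. I would read it off from the partial-fraction form $\cnhat(r,s)=\sumj \anj(r)/(s-\alnj)$: expanding each summand as $\anj(r)/s+O(1/s^2)$ shows that $\sumj \anj(r)$ is exactly the coefficient of $1/s$ in the asymptotic expansion of $\cnhat(r,s)$ as $s\to\infty$, i.e. $\sumj \anj(r)=\lim_{s\to\infty}s\,\cnhat(r,s)$.

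Next I would compute this same coefficient from the product form. Writing $\cnhat(r,s)+1=\prodj (s-\frac1r\alnj)/(s-\alnj)$, both numerator and denominator are monic polynomials of degree $n$ in $s$, whose $s^{n-1}$ coefficients are $-\frac1r\sumj\alnj$ and $-\sumj\alnj$ respectively. Carrying out the long division for large $s$ then yields
\begin{equation*}
\cnhat(r,s)+1=1+\frac{\left(\sumj\alnj\right)\left(1-\frac1r\right)}{s}+O\!\left(\frac{1}{s^2}\right),
\end{equation*}
so matching the $1/s$ coefficients of the two forms gives $\sumj \anj(r)=\left(\sumj\alnj\right)\left(1-\frac1r\right)$. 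This reduces the problem to computing the sum of the roots of $k_n$.

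Finally I would identify the roots $\alnj$ with the zeros of the monic polynomial $p_n$ of \eqref{eq2.8}, which is the reverse Bessel polynomial $p_n(z)=\sum_{k=0}^n \frac{(n+k)!}{k!\,(n-k)!\,2^k}\,z^{n-k}$. Its $z^{n-1}$ coefficient (the $k=1$ term) equals $\frac{(n+1)!}{2(n-1)!}=\frac{n(n+1)}{2}$, so Vieta's formula gives $\sumj\alnj=-\frac{n(n+1)}{2}$. Substituting this into the identity from the previous step yields $\sumj \anj(r)=-\frac{n(n+1)}{2}\left(1-\frac1r\right)=\frac{n(n+1)}{2}\left(\frac1r-1\right)$, as claimed. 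The one delicate point I anticipate is fixing the normalization of $p_n$ and reading off its $z^{n-1}$ coefficient correctly; once that coefficient is pinned down (from the explicit series above, or equivalently from the $1/z$ term in the large-argument expansion $z e^z k_n(z)=1+\frac{n(n+1)}{2z}+O(1/z^2)$), the remainder is routine bookkeeping with Vieta's formula and the $1/s$ asymptotics. Since every step is an algebraic identity in $s$, no analytic information about the locations of the roots $\alnj$ is required.
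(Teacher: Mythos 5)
Your proof is correct, and its first step coincides with the paper's: both arguments reduce the claim to computing $\lim_{s\to\infty}s\,\cnhat(r,s)$ (the paper invokes the initial value theorem for the Laplace transform; you obtain the same limit directly from the partial-fraction form, which is if anything cleaner since $\cnhat$ is rational in $s$). Where you genuinely diverge is in evaluating that limit. The paper substitutes the large-argument expansion $K_\nu(z)\sim\sqrt{\pi/(2z)}\,e^{-z}\{1+(\mu-1)/(8z)+\cdots\}$ with $\mu=4\nu^2$ into the last line of \eqref{eq2.16} and reads off the answer as $\frac{\mu-1}{8}\left(\frac{1}{r}-1\right)$ with $\mu=4(n+\frac12)^2$. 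You instead work with the finite product in \eqref{eq2.16}: the $1/s$ coefficient of a ratio of monic degree-$n$ polynomials is the difference of their subleading coefficients, which by Vieta's formula equals $\left(1-\frac{1}{r}\right)\sumj\alnj$, and the sum of the zeros of the monic polynomial $p_n$ in \eqref{eq2.8} is $-n(n+1)/2$. The two computations produce the same number for a reason worth noting: for half-integer order the asymptotic series for $K_{n+1/2}$ terminates, and its $1/z$ coefficient $\frac{\mu-1}{8}=\frac{n(n+1)}{2}$ is precisely the subleading coefficient of $p_n$. Your route is purely algebraic and self-contained once \eqref{eq2.8} is granted — no Bessel asymptotics are needed — whereas the paper's is a one-line consequence of a tabulated formula and would carry over to orders for which no polynomial representation exists. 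The normalization issue you flag is handled correctly: your explicit series for $p_n$ is the one consistent with the paper's monic convention, and in any case the normalization cancels from the ratio $k_n(sr)/k_n(s)$.
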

\begin{proof}
By the initial value theorem for the Laplace transform, 
\begin{equation}\label{eq2.18}
C_n(r,0)=\lim_{s\rightarrow \infty} s\cnhat(r,s).
\end{equation}
The first equality in \eqref{eq2.17} follows from 
\eqref{eq2.16}.
From \cite{handbook} (formula 9.7.2 on page 378), we have 
the asymptotic expansion 
\begin{equation}\label{eq2.19}
K_\nu(z)\sim \sqrt{\frac{\pi}{2z}}e^{-z}
\left\{1+\frac{\mu-1}{8z}+\frac{(\mu-1)(\mu-9)}{2!(8z)^2}+\cdots\right\},
\end{equation} 
where $\mu=4\nu^2$.
Substituting \eqref{eq2.16} and \eqref{eq2.19} into \eqref{eq2.18}, 
we obtain
\begin{equation}\label{eq2.20}
C_n(r,0)=\lim_{s\rightarrow \infty} s\left(\frac{1+\frac{\mu-1}{8sr}+O(s^{-2})}
{1+\frac{\mu-1}{8s}+O(s^{-2})}-1
\right)=\frac{\mu-1}{8}\left(\frac{1}{r}-1\right).
\end{equation}
The result \eqref{eq2.17} now follows from the fact that 
$\mu=4\nu^2=4\left(n+\frac{1}{2}\right)^2$.
\end{proof}

Despite the fact that $\max_j|\anj(r)|$ grows exponentially with $n$,
\eqref{eq2.17} shows that the {\em sum of weights} $\anj$ is only $O(n^2)$ for 
fixed $r$. Still, however, the formula 
\eqref{eq2.12} cannot be used in practice because of catastrophic 
cancellation in the summation 
\[ \sumj \anj(r) \int_0^{t-r+1} e^{\alnj (t-r+1-\tau)}\fnm(\tau)d\tau. \] 
Thus, we will need a different representation for the convolution 
operator $\int_0^{t-r+1} C_n(r,t-r+1-\tau)\fnm(\tau)d\tau$ 
which is suitable for numerical computation.

\subsection{Stable computation of the convolution integral} \label{sec:recurrence}

To obtain a stable formula, we note first that we may
rewrite \eqref{eq2.12} in the form:
\begin{equation}\label{eq3.2.1}
\unm(r,t)=\frac{1}{r}
\int_0^{t-r+1}\mathcal{L}^{-1}\left(\cnhat(r,s)+1\right)(r,t-r+1-\tau)
\fnm(\tau)d\tau.
\end{equation}
We then use \eqref{eq2.16} to express 
$\cnhat$ as
\begin{equation}\label{eq3.2.2}
\begin{aligned}
\cnhat(r,s)+1&=\prodj \frac{s-\frac{1}{r}\alnj}{s-\alnj}
&=\prodj \left(1+\frac{(1-\frac{1}{r})\alnj}
{s-\alnj}\right).
\end{aligned}
\end{equation}

We can, therefore, compute $\unm$ recursively:
\begin{equation}\label{eq3.2.3}
\begin{aligned}
\phi_0(t)&=\fnm(t),\\
\phi_j(t)&=\phi_{j-1}(t)+\left(1-\frac{1}{r}\right)\alnj\int_0^t 
e^{\alnj(t-\tau)}\phi_{j-1}(\tau)d\tau, \quad j=1,\cdots,n
\end{aligned}
\end{equation}
and, finally,
\begin{equation}\label{eq3.2.4}
\unm(r,t)=\frac{1}{r}\phi_n(t-r+1).
\end{equation}

Numerical experiments indicate that the above recursion 
is stable if the zeros $\alnj$ of $k_n$ are arranged in ascending
order according to their real parts, i.e., $\alpha_{n,1}$ is closest
to the negative real axis and $\alpha_{n,n}$ is closest to the imaginary
axis. 

\begin{remark}
Alternatively, it is easy to show that the functions $\phi_j$ ($j=1,\cdots,n$) are the solutions 
to the following first order system
of ordinary differential equations (ODEs) with zero initial conditions.
\begin{equation}\label{eq3.2.5}
A\frac{d\phi}{dt} = B\phi + F(t),
\end{equation}
where $\phi$ is a column vector of length $n$ with the $j$th entry being $\phi_j$, $A$, $B$ are $n\times n$ constant matrices defined by the formulas
\begin{equation}\label{eq3.2.6}
A = \left( \begin{array}{cccc}
1 &  & & 0\\
-1 & \ddots &  & \\
& \ddots & \ddots &  \\
0 & & -1 & 1 \end{array} \right),
\qquad
B = \left( \begin{array}{cccc}
\alpha_{n,1} &  & & 0\\
-\frac{\alpha_{n,2}}{r} 
& \ddots &  & \\
& \ddots & \ddots &  \\
0 & & -\frac{\alpha_{n,n}}{r}  & \alpha_{n,n} \end{array} \right),
\end{equation}
and $F$ is a column vector of length $n$ whose only nonzero
entry is $F_1(t)=f'_{nm}(t)-\frac{\alpha_{n,1}}{r}f_{nm}(t)$.
\end{remark}

\begin{remark}
The ODE system \eqref{eq3.2.5} can actually be solved analytically.
That is, one may multiply both sides of \eqref{eq3.2.5} by $A^{-1}$
to obtain 
\begin{equation}\label{eq3.2.7}
\frac{d\phi}{dt}=M\phi+A^{-1}F(t),
\end{equation}
where $M=A^{-1}B$. It is clear that $M$ is a constant lower triangular
matrix. One could then diagonalize the system using the eigen-decomposition 
$M=S\Lambda S^{-1}$. This, however, is numerically unstable since $M$ is a 
highly {\it nonnormal} matrix. Thus, even though
the condition number of $M$ is not very high (numerical evidence shows that 
$\text{cond}(M) = O(n)$), $S$ is extremely ill-conditioned. In fact, more detailed
analysis shows that this approach leads exactly to the formula 
\eqref{eq2.12}. Nevertherless, the ODE system \eqref{eq3.2.5} itself can be solved 
numerically using standard ODE packages,
albeit less efficiently than the explicit recursive approach 
we present in section \ref{sec:num}, especially for high precision.
\end{remark} 
\section{The Robin problem}

In this section, we consider the Robin problem for the scalar
wave equation on the unit sphere:
\begin{equation}\label{eq3.1}
v_{tt}-\Delta v=0, \qquad r>1,\quad t>0,
\end{equation}
with homogeneous initial data
\begin{equation}\label{eq3.2}
v(r,\theta,\phi,0)=0, \qquad v_t(r,\theta,\phi,0)=0, \qquad r>1,
\end{equation}
and the boundary condition
\begin{equation}\label{eq3.3}
\left(\pr+1\right)v(r,\theta,\phi,t)=g(\theta,\phi,t), \qquad r=1.
\end{equation}

It should be noted that Tokita \cite{tokita} extended Wilcox's analysis
of the Dirichlet problem to the case of 
Robin boundary conditions of the form 
$(\pr+\sigma)v=g$, 
although he assumed that $\sigma<1$ in his discussion.
We are primarily concerned with the case $\sigma=1$ since it arises in the 
solution of the full Maxwell equations \cite{GHJ2}.

As in the analysis of the Dirichlet problem, we 
first expand $v$ and $g$ in terms of spherical harmonics, perform
the Laplace transform in $t$, match the boundary data and obtain

\begin{equation}\label{eq3.4}
\begin{aligned}
v(r,\theta,\phi,t)&=\sumnm \mathcal{L}^{-1}(\hvnm(r,s)) \ynm(\theta,\phi),\\
g(\theta,\phi,t)&=\sumnm \mathcal{L}^{-1}(\hgnm(s)) \ynm(\theta,\phi),
\end{aligned}
\end{equation}
and
\begin{equation}\label{eq3.5}
\hvnm(r,s)=\frac{k_n(sr)}{sk'_n(s)+k_n(s)}\hgnm(s).
\end{equation}

We turn now to a study the properties of the kernel in \eqref{eq3.5},
letting
\begin{equation}\label{eq3.8}
\mathcal{K}_n(r,s)=\frac{k_n(sr)}{sk'_n(s)+k_n(s)},
\end{equation}
and
\begin{equation}\label{eq3.9}
D_n(z)=zk'_n(z)+k_n(z).
\end{equation}
Recalling from \ref{eq2.8} that
$k_n(z)=\frac{p_n(z)}{z^{n+1}}e^{-z}$, we have
\begin{equation}\label{eq3.10}
D_n(z) =\frac{-zp_n(z)-np_n(z)+zp'_n(z)}{z^{n+1}}e^{-z}
\equiv \frac{q_{n+1}(z)}{z^{n+1}}e^{-z}.
\end{equation}
Hence,
\begin{equation}\label{eq3.11}
\mathcal{K}_n(r,s)=\frac{1}{r}\frac{p_n(sr)}{q_{n+1}(s)r^n}e^{-s(r-1)}.
\end{equation}
In particular, for $n=0$, we have
\begin{equation}\label{eq3.12}
\mathcal{K}_0(r,s)=-\frac{1}{rs}e^{-s(r-1)}.
\end{equation}
Obviously, the poles of $\mathcal{K}_n$ are simply the zeros of $D_n$.
Those zeros have been characterized by Tokita \cite{tokita} in the following lemma.
\begin{lemma}\label{lem3.1}
\emph{[adapted from \cite{tokita}.]}
For $n\geq 1$, 
$D_n(z)=zk'_n(z)+k_n(z)$ has $n+1$ simple roots denoted 
by $\{\beta_{n,0},\cdots,\beta_{n,n}\}$. All the roots lie in the open
left half of the complex plane symmetrically with respect to the real axis.
Furthermore, they satisfy the following estimates
\begin{equation}\label{eq3.13}
\Re \benj < -A n^{\frac{1}{3}},
\end{equation}
\begin{equation}\label{eq3.14}
|\benj| < B n,
\end{equation}
for sufficiently large $n$ and $0\leq j\leq n$.
Hence, there exists a positive number $\mu$ such that 
\begin{equation}\label{eq3.15}
\Re \benj < -\mu, 
\end{equation}
for all $n\geq 1$ and $0\leq j \leq n$.
\end{lemma}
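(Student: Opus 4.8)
The plan is to prove the claimed properties of the zeros $\benj$ of $D_n(z) = zk_n'(z) + k_n(z)$ by reducing them to known facts about the zeros of the spherical modified Hankel function $k_n$ and its relation to the polynomial $q_{n+1}$. From \eqref{eq3.10} we have $D_n(z) = q_{n+1}(z)z^{-(n+1)}e^{-z}$, where $q_{n+1}(z) = zp_n'(z) - (z+n)p_n(z)$ is a polynomial of degree $n+1$, so the zeros of $D_n$ in the finite plane (away from $z=0$, which one checks is not a root for $n\ge 1$) are exactly the $n+1$ roots of $q_{n+1}$. Since $q_{n+1}$ has real coefficients (the $\alnj$ come in conjugate pairs, so $p_n$ has real coefficients, hence so does $q_{n+1}$), the roots are symmetric about the real axis. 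Simplicity and the count $n+1$ follow from $q_{n+1}$ having degree exactly $n+1$ together with the separation estimate, which rules out repeated roots.

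First I would establish that all roots lie in the open left half-plane $\Re z < 0$. The cleanest route is a monotonicity/energy argument: I would write $D_n(z)=0$ as $zk_n'(z) = -k_n(z)$, i.e. the logarithmic derivative satisfies $z\,k_n'(z)/k_n(z) = -1$, and use the known behavior of $h(z) := k_n'(z)/k_n(z)$ (the logarithmic derivative studied in the preceding subsection). The decaying solution $k_n(sr)$ corresponds to an outgoing/dissipative impedance condition $(\partial_r + 1)v$, so on physical grounds any pole of $\mathcal{K}_n$ in $\Re s \ge 0$ would give a growing free solution of the wave equation with dissipative boundary data, contradicting well-posedness; I would make this rigorous by a Green's-identity (energy) estimate on the sphere exterior showing $\Re\benj < 0$ strictly. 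This is essentially the argument Tokita uses, and I would cite \cite{tokita} for the detailed placement.

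The quantitative bounds \eqref{eq3.13} and \eqref{eq3.14} are the heart of the matter and the step I expect to be the main obstacle. For the upper bound $|\benj| < Bn$, I would use the fact, quoted from Lemma~\ref{lem6.1}, that the zeros $\alnj$ of $k_n$ satisfy $|\alnj| \sim O(n)$; since $q_{n+1} = zp_n' - (z+n)p_n$ is built from $p_n = \prodj(z-\alnj)$, a root $\benj$ of large modulus forces a balance among terms all controlled by $\max_j|\alnj| = O(n)$, giving $|\benj| = O(n)$ by a standard Cauchy-type root bound applied to the coefficients of $q_{n+1}$. The lower bound $\Re\benj < -An^{1/3}$ on the real part is the delicate one: the exponent $1/3$ is the signature of the Airy-type transition region in the uniform asymptotics of $K_{n+1/2}$, and capturing it requires the uniform (Olver) asymptotic expansion of $k_n(nz)$ and its derivative in the complex plane, not merely the leading-order large-argument form \eqref{eq2.19}. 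I would set $z = n\zeta$, insert the uniform asymptotics for $k_n(n\zeta)$ and $k_n'(n\zeta)$ into the equation $\zeta k_n'(n\zeta)/k_n(n\zeta) = -1/n$, and analyze where the resulting transcendental equation can be satisfied; the turning-point analysis forces the admissible $\zeta$ to lie within an $O(n^{-2/3})$ neighborhood of the boundary of the relevant Stokes region, which after rescaling by $n$ yields $\Re\benj \lesssim -n^{1/3}$.

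Finally, \eqref{eq3.15} is an immediate corollary: since the finitely many small cases are handled by direct computation and \eqref{eq3.13} gives $\Re\benj < -An^{1/3} \le -A$ for all $n$ large enough, taking $\mu$ to be the minimum of $A$ and the real parts of the roots for the finitely many remaining small $n$ yields a uniform positive bound $\Re\benj < -\mu$ for all $n\ge 1$ and all $0\le j\le n$. Because the substance of \eqref{eq3.13}--\eqref{eq3.14} is due to Tokita, I would present the left-half-plane location and the symmetry/counting in full but defer the sharp asymptotic constants $A$, $B$ to \cite{tokita}, reproducing only the turning-point heuristic that explains the $n^{1/3}$ exponent.
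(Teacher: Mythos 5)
The paper itself gives no proof of this lemma: it is imported wholesale from Tokita's 1972 paper (the bracketed ``adapted from'' is the entire justification), so there is no internal argument to compare yours against. Your reconstruction has the right architecture --- reduction of the zeros of $D_n$ to those of the degree-$(n+1)$ polynomial $q_{n+1}(z)=zp_n'(z)-(z+n)p_n(z)$, symmetry from real coefficients, and the correct identification of the $n^{1/3}$ exponent in \eqref{eq3.13} as the signature of the Airy turning-point region in Olver's uniform asymptotics (this is consistent with the $n^{-2/3}$ scaling in \eqref{eq6.7}). The deduction of \eqref{eq3.15} from \eqref{eq3.13} plus the finitely many small $n$ is also correct as stated, and your decision to defer the sharp constants $A$, $B$ to Tokita matches what the paper actually does.

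Two points in your sketch would not survive as written if the proof had to be self-contained. First, simplicity of the roots does not follow from ``the separation estimate'': the separation in Lemma~\ref{lem6.1} is asymptotic in $n$ and says nothing about a fixed finite $n$, so it cannot rule out a repeated root of $q_{n+1}$ for, say, $n=3$. A genuine simplicity argument needs to exploit the differential equation satisfied by $k_n$ (a Wronskian-type identity at a putative double zero of $zk_n'+k_n$), or else a direct algebraic argument on $q_{n+1}$. Second, the left-half-plane location ``on physical grounds'' is the step where the case $\sigma=1$ actually matters: the paper explicitly notes that Tokita's analysis assumes $\sigma<1$, and the standard energy identity for $(\partial_r+1)v=0$ yields conservation of $\tfrac12\int(v_t^2+|\nabla v|^2)+\tfrac12\int_{r=1}v^2$ rather than strict dissipation, which excludes poles in the open right half-plane but leaves the imaginary axis to be handled separately. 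That exclusion (no purely imaginary $\beta_{n,j}$) is precisely the part that must be ``adapted'' from Tokita, and your proposal does not address it.
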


From the preceding lemma, for $n\geq 1$ we have
\begin{equation}\label{eq3.16}
\begin{aligned}
\mathcal{K}_n(r,s)&=\frac{1}{r}e^{-s(r-1)}
\frac{\prod_{j=1}^n \left(s-\frac{1}{r}\alnj\right)}
{-\prod_{j=0}^n(s-\benj)}\\
&=-\frac{1}{r}e^{-s(r-1)}\frac{1}{s-\beta_{n,0}}
\prod_{j=1}^n 
\left(1+\left(\benj-\frac{1}{r}\alnj\right)\frac{1}{s-\benj}\right).
\end{aligned}
\end{equation}

One could carry out a partial fraction expansion for the right hand side 
of \eqref{eq3.16} to obtain
\begin{equation}\label{eq3.19}
\mathcal{K}_n(r,s)
=\frac{1}{r}e^{-s(r-1)}\sum_{j=0}^n \frac{\bnj(r)}{s-\benj},
\end{equation}
where the coefficients $\bnj$ are given by the formula
\begin{equation}\label{eq3.20}
\begin{aligned}
\bnj(r)&=-\frac{\prod_{k=1}^n \left(\benj-\frac{1}{r}\alnk\right)}
{\prod_{k=0,k\neq j}^n(\benj-\benk)}\\
&=\frac{p_n(\benj r)}{r^n q'_{n+1}(\benj)}\\
&=re^{\benj (r-1)}\frac{k_n(\benj r)}{D'_n(\benj)}.
\end{aligned}
\end{equation}
This would yield
\begin{equation}\label{eq3.21}
\vnm=\frac{1}{r}
\sum_{j=0}^n
\bnj(r) \int_0^{t-r+1}
e^{\benj (t-r+1-\tau)}\gnm(\tau)d\tau, \qquad n>0.
\end{equation}
Unfortunately, the 
coefficients $\bnj$ ($j=0,\cdots,n$) behave as badly as the coefficients
$\anj$ defined in \eqref{eq2.10} for the Dirichlet problem. 
That is, catastrophic cancellation
in \eqref{eq3.21} makes it ill-suited for numerical computation.

Fortunately, 
as in section \ref{sec:recurrence}, we can compute $\vnm$ without catastrophic
cancellation using the following recurrence ($\beta_{0,0}=0$):
\begin{equation}\label{eq3.17}
\begin{aligned}
\psi_0(t)&=\int_0^t e^{\beta_{n,0}(t-\tau)} \gnm(\tau)d\tau,\\
\psi_j(t)&=\psi_{j-1}(t)+\left(\benj-\frac{1}{r}\alnj\right)\int_0^t 
e^{\benj(t-\tau)}\psi_{j-1}(\tau)d\tau, \quad j=1,\cdots,n,
\end{aligned}
\end{equation}
with
\begin{equation}\label{eq3.18}
\vnm(r,t)=-\frac{1}{r}\psi_n(t-r+1).
\end{equation}
We leave the derivation of the recurrence to the reader.
\begin{remark}
It is possible to write down a system of ODEs that is equivalent to \eqref{eq3.17}.
We omit details since the derivation is straightforward and we prefer
the recurrence for numerical purposes in any case.
\end{remark} 
\section{A numerical method} \label{sec:num}

In order to carry out the recurrences 
\eqref{eq3.2.3} or \eqref{eq3.17}, we first need to compute 
to compute the zeros of $k_n(z)$ and $D_n(z)$. 
The following lemma provides asymptotic approximations of the zeros
of these two functions, which we will use as initial guesses followed by 
a simple Newton iteration. In practice, we have found that six Newton steps
are sufficient to achieve double precision accuracy for $n < 10,000$.

\begin{lemma}\label{lem6.1}
\emph{(Asymptotic distribution of the zeros of $k_n(z)$ and $D_n(z)$, 
adapted from \cite{jiang1,tokita}); see also the appendix.}
\begin{enumerate}
\item 
The zeros of 
$k_{n}(z)$ have the following asymptotic expansion
\begin{equation}\label{eq6.1}
\alnj \sim n( z(\zeta_{j}) + O(n^{-1})), \text{
$n\rightarrow\infty$},
\end{equation}
uniformly in $j$, where $\zeta_{j}$ is defined by the formula
\begin{equation}\label{eq6.2}
\zeta_{j} = e^{-2\pi i/3}\left(n+ \frac {1}{2}\right)^{-2/3}a_{j},
\end{equation}
$a_{j}$ is the $j$th negative zero of 
the Airy function $Ai$
whose asymptotic expansion is given by the formula
\begin{equation}\label{eq6.3}
a_{j}\sim-(\frac{3\pi}{2})^{2/3}(j-\frac{1}{4})^{2/3}
+O(j^{-4/3}),
\end{equation}
and $z(\zeta)$ is obtained from inverting the equation
\begin{equation}\label{eq6.4}
\frac{2}{3}\zeta^{3/2}=\ln{\frac{i(1+\sqrt{1+z^2})}{z}} - \sqrt{1+z^2},
\end{equation}
where the branch is chosen so that $\zeta$ is real when $z$ is
positive imaginary. In other words, $z(\zeta)$ lies on
the curve whose parametric equation is
\begin{equation}\label{eq6.5}
z(t)=-(t^2-t\tanh{t})^{1/2}\pm i(t\coth{t}-t^2)^{1/2},
\end{equation}
where $t\in[0,t_{0}]$ and $t_{0}=1.19968\ldots$ is the positive
root of $t=\coth{t}$. 

\item
The zeros of 
$D_n(z)=zk'_{n}(z)+k_n(z)$ have the asymptotic expansion
\begin{equation}\label{eq6.6}
\benj \sim n( z(\xi_{j}) + O(n^{-1})), \text{
$n\rightarrow\infty$},
\end{equation}
uniformly in $j$, where $\xi_{j}$ is defined by the formula
\begin{equation}\label{eq6.7}
\xi_{j} = e^{-2\pi i/3}\left(n+ \frac {1}{2}\right)^{-2/3}b_{j},
\end{equation}
 and $b_{j}$ is the $j$th negative zero of 
the first derivative of the Airy function $Ai'$
whose asymptotic expansion is given by the formula
\begin{equation}\label{eq6.8}
b_{j}\sim-(\frac{3\pi}{2})^{2/3}(j-\frac{3}{4})^{2/3}
+O(j^{-4/3}),
\end{equation}
and $z(\xi)$ is defined as in \eqref{eq6.4} with $\zeta$ 
replaced by $\xi$.
\end{enumerate}
\end{lemma}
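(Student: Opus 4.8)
The plan is to reduce both parts to Olver's uniform (Airy-type) asymptotic expansion of the modified Bessel function $K_\nu$ of large order and then to locate the zeros by a perturbation argument near the zeros of the appropriate Airy function. First I would pass from $k_n$ and $D_n$ to $K_\nu$ with $\nu = n + \tfrac12$. From $k_n(z) = \sqrt{2/(\pi z)}\,K_{n+1/2}(z)$ the nonzero roots of $k_n$ are exactly those of $K_\nu$, while a short computation gives
\[
D_n(z) = z k'_n(z) + k_n(z) = \sqrt{\tfrac{2}{\pi}}\, z^{-1/2}\Bigl( z K'_\nu(z) + \tfrac12 K_\nu(z)\Bigr),
\]
so the zeros of $D_n$ are precisely the roots of $z K'_\nu(z) + \tfrac12 K_\nu(z) = 0$. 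This is the structural point that separates the two parts: part (1) concerns zeros of $K_\nu$ itself, whereas part (2) concerns a first-order combination in which the $K'_\nu$ term will turn out to dominate.

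Next I would introduce the Olver variable. Setting $z = \nu\tilde z$ and defining $\zeta=\zeta(\tilde z)$ through relation \eqref{eq6.4} (read with $\tilde z$ in place of $z$), which sends the turning point $\tilde z = i$ to $\zeta = 0$, the uniform expansion takes the schematic form
\[
K_\nu(\nu\tilde z) \sim \Phi(\tilde z)\,\nu^{-1/3}\bigl[Ai(\nu^{2/3}\zeta) + O(\nu^{-2})\bigr], \qquad z K'_\nu(z) + \tfrac12 K_\nu(z) \sim \Psi(\tilde z)\,\nu^{1/3}\bigl[Ai'(\nu^{2/3}\zeta) + O(\nu^{-2/3})\bigr],
\]
with nonvanishing prefactors $\Phi,\Psi$ built from $(4\zeta/(1+\tilde z^2))^{\pm1/4}$. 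The second relation reflects that differentiating the leading Airy factor produces $Ai'$ and that, near the turning point, $zK'_\nu$ exceeds $K_\nu$ by a factor of order $\nu^{2/3}$, so the $\tfrac12 K_\nu$ term and the prefactor derivatives are lower order. The rotation $e^{-2\pi i/3}$ in \eqref{eq6.2} and \eqref{eq6.7} enters because $K_\nu$ is the recessive solution, so the governing Airy solution in the sector containing the zeros is $Ai(e^{2\pi i/3}\nu^{2/3}\zeta)$. Reading off the leading terms, the zeros of $K_\nu$ sit where $Ai(e^{2\pi i/3}\nu^{2/3}\zeta)=0$, i.e.\ $e^{2\pi i/3}\nu^{2/3}\zeta = a_j$, giving $\zeta=\zeta_j$ and $\tilde z = z(\zeta_j)$; the zeros of the Robin combination sit where $Ai'(e^{2\pi i/3}\nu^{2/3}\zeta)=0$, giving $\zeta=\xi_j$ and $\tilde z = z(\xi_j)$. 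Multiplying back by $\nu = n(1+O(n^{-1}))$ produces \eqref{eq6.1} and \eqref{eq6.6}, the discrepancy between $\nu$ and $n$ being absorbed into the stated $O(n^{-1})$ relative error. The Airy-zero expansions \eqref{eq6.3} and \eqref{eq6.8} are then the classical formulas $a_j \sim -\bigl(\tfrac{3\pi}{8}(4j-1)\bigr)^{2/3}$ and $b_j \sim -\bigl(\tfrac{3\pi}{8}(4j-3)\bigr)^{2/3}$, which I would simply rearrange into the forms shown.

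To turn this from a heuristic into a proof I would convert ``the leading Airy approximant vanishes at $z(\zeta_j)$'' into ``$K_\nu$ (resp.\ the Robin combination) has a genuine simple zero within $O(n^{-1})$ of $z(\zeta_j)$.'' For each approximate root I would apply Rouch\'e's theorem on a circle of radius $\sim n^{-1}$, comparing the true function against its leading Airy term and using Olver's explicit remainder bounds to dominate the error; this yields existence, uniqueness, and simplicity of a nearby exact zero and pins down the $O(n^{-1})$ correction, into which the shift from the $\tfrac12 K_\nu$ term, the higher Olver coefficients, and $\nu$ versus $n$ all fall. The main obstacle is uniformity in $j$: the comparison must succeed simultaneously for all $j$ from $1$ to $n$, and as $j$ grows the zeros march away from the turning point along the curve \eqref{eq6.5}, so I must check both that Olver's expansion stays valid there and that the spacing between consecutive Airy zeros stays large relative to the perturbation. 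Securing this requires feeding the large-$j$ asymptotics of $a_j$ and $b_j$ back into Olver's uniform error bounds to keep $\nu^{2/3}\zeta$ in the regime where the expansion is sharp --- precisely the delicate uniform estimate established in \cite{jiang1,tokita} and revisited in the appendix, on which I would rely to finish the argument.
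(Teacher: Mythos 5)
Your proposal follows essentially the same route as the paper: the paper does not prove this lemma from scratch but adapts it from \cite{jiang1,tokita}, and its appendix sketches exactly the argument you outline --- Olver's uniform Airy-type expansions of $H_{n+1/2}^{(1)}$ (equivalently $K_{n+1/2}$) of large order, identification of the zeros of $k_n$ with zeros of ${\rm Ai}$ and of $D_n$ with zeros of ${\rm Ai}'$ after the rotation $e^{2\pi i/3}\zeta$, and the classical asymptotics of $a_j$ and $b_j$. Your reduction of $D_n$ to $zK_\nu'(z)+\tfrac12 K_\nu(z)$ and the Rouch\'e/uniform-remainder program to make the perturbation argument rigorous are consistent with what the cited sources carry out.
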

Figure \ref{fig6.1} shows the zeros of $k_{10}(z)$, $D_{10}(z)$, $k_{11}(z)$ 
and $D_{11}(z)$.

\begin{figure}[!ht]
\centerline{\includegraphics[trim=120 280 120 260, clip, height=3in]
{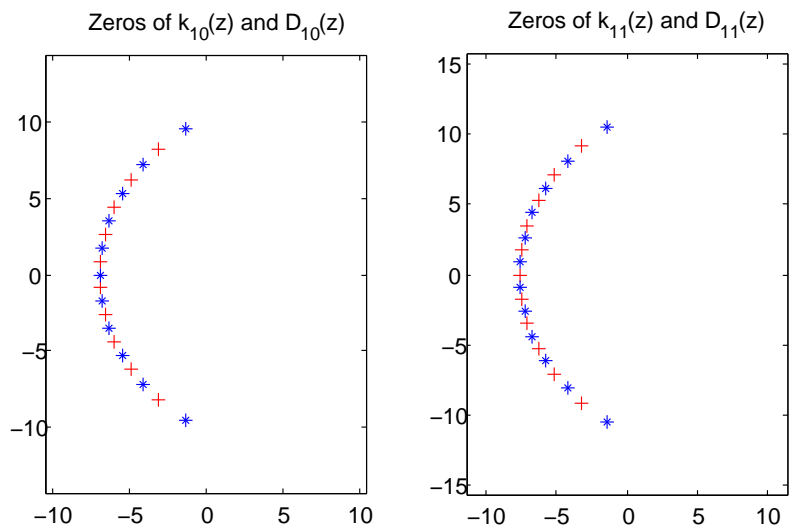}} 
\caption{\emph{Zeros of $k_{10}(z)$, $D_{10}(z)$, $k_{11}(z)$ and $D_{11}(z)$.
The zeros of $k_n$ are marked by red $+$, and the zeros of $D_n$ are marked by 
blue $\ast$.  } } \label{fig6.1}
\end{figure}

\subsection{Marching in time}

We now present a high-order discretization scheme for computing
$\unm$ and $\vnm$. We will only discuss the computation of $\unm$ in detail,
since the treatment of $\vnm$ is analogous.
Recall that the relevant recurrence relations are \eqref{eq3.2.3}
and \eqref{eq3.2.4}. To proceed, we first introduce
the auxillary functions 
\begin{equation}\label{eq7.1}
h_j(t)=\int_0^t e^{\alnj(t-\tau)}\phi_{j-1}(\tau)d\tau, \qquad j=1,\cdots,n.
\end{equation}
Then, \eqref{eq3.2.3} becomes
\begin{equation}\label{eq7.2}
\phi_j(t)=\phi_{j-1}(t)+\left(1-\frac{1}{r}\right)\alnj h_j(t).
\end{equation}
It is easy to check that $h_j(k\Delta t)$ satisfies the recurrence relation
\begin{equation}\label{eq7.3}
h_j(k\Delta t)=e^{\alnj \Delta t} h_j((k-1)\Delta t) + \int_{(k-1)\Delta t}
^{k \Delta t} e^{\alnj (k\Delta t-\tau)} \phi_{j-1}(\tau)d\tau.
\end{equation}
Thus, we need only consider the calculation of the 
integral over $[(k-1)\Delta t, k\Delta t]$. For this,
we interpolate $\phi_{j-1}(\tau)$ by a polynomial of degree $p-1$ 
with the shifted and scaled Legendre nodes as interpolation nodes. That is,
\begin{equation}\label{eq7.4}
\begin{aligned}
\phi_{j-1}(\tau)&\approx \sum_{i=0}^{p-1}c_i P_i\left(
\frac{2}{\Delta t}(\tau-(k-\frac{1}{2})\Delta t)\right)\\
&=\sum_{i=0}^{p-1}\sum_{l=1}^p u_{il}
\phi_{j-1}((k-1)\Delta t+\Delta t(1+x_l)/2) 
P_i\left(\frac{2}{\Delta t}(\tau-(k-\frac{1}{2})\Delta t)\right),
\end{aligned}
\end{equation}
where $x_l$ ($l=1,\cdots,p$) are the standard Legendre nodes
on $[-1,1]$ and $u_{il}$ is the $(i,l)$ entry of the matrix 
converting function values to the coefficients of a
Legendre expansion.

Substituting \eqref{eq7.4} into the integral on the 
right side of \eqref{eq7.3} and simplifying, we obtain
\begin{equation}\label{eq7.5}
\begin{aligned}
&\int_{(k-1)\Delta t}^{k\Delta t} e^{\alpha(k\Delta t-\tau)} 
\phi_{j-1}(\tau)d\tau\\
&\approx \sum_{l=1}^p u_{il} \phi_{j-1}((k-1)\Delta t+\Delta t(1+x_l)/2) \\
&\cdot \frac{\Delta t}{2}\sum_{i=0}^{p-1}  
\int_{-1}^1 e^{\alnj \frac{\Delta t}{2}(1-y)} P_i(y) dy\\
&=\sum_{l=1}^p q_l(\alnj) \phi_{j-1}((k-1)\Delta t+\Delta t(1+x_l)/2),
\end{aligned}
\end{equation}
where the coefficients $q_l$ ($l=1,\cdots,p$) are given
by the formula 
\begin{equation}\label{eq7.6}
q_l(\alnj)=
\frac{\Delta t}{2}\sum_{i=0}^{p-1}u_{il}
 \int_{-1}^1 e^{\alnj \frac{\Delta t}{2}(1-y)} P_i(y) dy.
\end{equation}
Substituting \eqref{eq7.5} into \eqref{eq7.3}, we obtain
\begin{equation}\label{eq7.7}
h_j(k\Delta t)=e^{\alnj \Delta t} h_j((k-1)\Delta t) +
\sum_{l=1}^p q_l(\alnj) \phi_{j-1}((k-1)\Delta t+\Delta t(1+x_l)/2).
\end{equation}
In order to be able to use \eqref{eq7.7}, 
we need to calculate $\phi_{j-1}((k-1)\Delta t+\Delta t/2(1+x_l))$.
For this, we can again apply the recurrence \eqref{eq3.2.3}
and obtain
\begin{equation}\label{eq7.8}
\begin{aligned}
&\phi_{0}((k-1)\Delta t+\Delta t(1+x_l)/2)=f_{nm}((k-1)\Delta t+\Delta t/2(1+x_l)),\\
&\phi_j((k-1)\Delta t+\Delta t(1+x_l)/2)=\phi_{j-1}((k-1)\Delta t+\Delta t/2(1+x_l))\\
&+\left(1-\frac{1}{r}\right)\alnj 
e^{\alnj \Delta t (1+x_l)/2} h_j((k-1)\Delta t)\\
&+\left(1-\frac{1}{r}\right)\alnj \sum_{s=1}^p w_{ls}(\alnj)\phi_{j-1}((k-1)\Delta t+\Delta t/2(1+x_s)),
\end{aligned}
\end{equation}
where the coefficients $w_{ls}$, for $l,s \in \{1,\dots,p\}$, are given
by the formula
\begin{equation}\label{eq7.9}
w_{ls}(\alnj)=
\frac{\Delta t}{2}\sum_{i=0}^{p-1}u_{is}
 \int_{-1}^{x_l} e^{\alnj \frac{\Delta t}{2}(x_l-y)} P_i(y) dy.
\end{equation}

In summary, the algorithm for computing $\unm(r,T)$ proceeds in two
stages: a precomputation stage and a time-marching stage. 

\begin{algorithm}[!ht]

\vspace{.1in}

\caption{Precomputation phase} 
\label{alg1}
\begin{algorithmic}[1]
\REQUIRE For spherical harmonic order $n$, time step $\Delta t$,
and desired order of accuracy $p$, precompute
the coefficients needed in the marching scheme for $\unm$.

\STATE Compute and store the zeros $\alnj$ ($j=1,\cdots,n$) of $k_n$.
\STATE Compute the Legendre nodes $x_l$ ($l=1,\cdots,p$) and 
the $p\times p$ matrix $u$ which converts 
function values to the coefficients of the corresponding
Legendre expansion.
\FOR{$j=1:n$ and $i=1:p$}
\STATE Compute the integrals
 $\int_{-1}^1 e^{\alnj \frac{\Delta t}{2}(1-y)} P_i(y) dy$.
\ENDFOR
\FOR{$j=1:n$ and $i=1:p$}
\STATE Compute and store the coefficients $q_l(\alnj)$ defined in \eqref{eq7.6}.
\ENDFOR
\FOR{$j=1:n$, $i=1:p$, and $l=1:p$}
\STATE Compute the integrals
 $\int_{-1}^{x_l} e^{\alnj \frac{\Delta t}{2}(x_l-y)} P_i(y) dy$.
\ENDFOR
\FOR{$j=1:n$, $s=1:p$, and $l=1:p$}
\STATE Compute and store the coefficients $w_{ls}(\alnj)$ defined in \eqref{eq7.9}.
\ENDFOR
\FOR{$j=1:p$}
\STATE Compute and store $e^{\alnj \Delta t}$.
\ENDFOR
\FOR{$j=1:n$ and $l=1:p$}
\STATE Compute and store $e^{\alnj \Delta t (1+x_l)/2}$.
\ENDFOR
\end{algorithmic}
\end{algorithm}

\begin{algorithm}[!ht]

\vspace{.2in}

\caption{Marching in time} 
\label{alg2}
\begin{algorithmic}[1]
\REQUIRE Given $n$, $r$, $T$, 
the desired order of accuracy $p$, and the number of desired time steps $N_T$,
compute the spherical harmonic mode $\unm$ at $(r,T)$ defined by \eqref{eq3.2.4}.
\STATE Set $\Delta t=(T-r+1)/N_T$.
\FOR{$j=1:n$}
\STATE Set $h_j=0$.
\ENDFOR
\FOR{$k=1:N_T$}
\FOR{$l=1:p$}
\STATE Evaluate the boundary data $\fnm((k-1)\Delta t+\Delta t(1+x_l)/2)$
by computing the spherical harmonic transform of the Dirichlet data $f$,
and set $\phi_{0}((k-1)\Delta t+\Delta t(1+x_l)/2)
=f_{nm}((k-1)\Delta t+\Delta t/2(1+x_l))$.
\ENDFOR
\FOR{$j=1:n$ and $l=1:p$}
\STATE Use \eqref{eq7.8} to compute $\phi_j((k-1)\Delta t+\Delta t(1+x_l)/2)$
\ENDFOR
\FOR{$j=1:n$}
\STATE Use \eqref{eq7.7} to update $h_j$.
\ENDFOR
\ENDFOR
\STATE Set $\unm=\fnm(N_T \Delta t)$.
\FOR{$j=1:n$}
\STATE Compute $\unm=\unm+\left(1-\frac{1}{r}\right)\alnj h_j$.
\ENDFOR
\STATE Compute $\unm=\unm/r$.
\end{algorithmic}
\end{algorithm}

\clearpage

\subsection{Computational complexity}  

For each spherical harmonic mode, it is easy to see that the precomputation
cost is $O(np^2)$ and the marching cost is $O(np^2 N_T)$, where $p$ is the
desired order of accuracy and $N_T$ is the total number of time steps. Thus, if
we truncate the spherical harmonic expansion order at $N$, then 
the precomputation cost is $O(N^2 p^2)$ and the marching cost is 
$O(N^3 p^2 N_T)$.
The cost of computing the spherical harmonic transform of the 
boundary data at all times is $O(N^3 N_T p)$ and the cost of the inverse 
spherical harmonic transform at the final time is $O(N^3)$. Summing all these
factors up, we observe that the total computational cost of our algorithm
is $O(N^3 p^2 N_T)$.

\section{Numerical examples}
We have implemented the above algorithm in Fortran for both the Dirichlet and 
Robin problems governed by the scalar wave equation.
To test its convergence and stability, we consider the exact solution
\begin{equation}
 u(x,t)=\sum_{i=1}^2 c_i e^{-(t-t_i-|x-y_i|)^2/a_i}
\cos(k_i(t-|x-y_i|))/|x-y_i| 
\label{uexact}
\end{equation}
with $y_1=(0.3,-0.5,0.6)$, $t_1=1.2$, $a_1=0.05$, $k_1=100$, and
$y_2=(-0.4,-0.5,0.7)$, $t_2=3.2$, $a_2=0.28$, $k_2=80$.
The numerical solution is computed on a sphere of radius 
$r=100$ at $t=103$. 

Tables \ref{tab8.1}-\ref{tab8.4} show the relative $L^2$ error of the numerical 
solution of the scalar wave equation for varying values of $N$, the order of
the spherical expansion and total number of time steps. 
Note that the solution is oscillatory in both space 
and time, so that finite difference or finite element
methods would have difficulty computing the solution in the far field with high precision
because of numerical dispersion errors.
In Tables \ref{tab8.1} and \ref{tab8.3}, the order of temporal convolution
is $p=10$ and we break the time interval $[99,103]$ into $200$ equispaced subintervals 
(yielding a total of $2000$ discretization points in time).
In Tables \ref{tab8.2} and \ref{tab8.4}, we use $80$ terms in the spherical 
harmonic expansions. These tables show that numerical solution converges 
spectrally fast to the exact solution. 

\begin{table}[ht]
\center\begin{tabular}{|c|c|c|c|c|c|c|}
\hline
$N_S$ & 102400 & 129600 &  160000 & 193600 & 230400 & 270400\\
\hline
$N$ & 80 &     90 &    100 &    110 &    120 &    130\\
\hline
$E$ &  0.84E-01&  0.65E-03&  0.12E-05&  0.64E-09 & 0.89E-12 & 0.88E-12\\
\hline
\end{tabular}
\caption{Relative $L^2$ error of the numerical solution of the Dirichlet problem 
with increasing spherical harmonic expansion order $N$. $N_S$ is the total
number of discretization on the unit sphere. Since the discretization error
is usually greater than the truncation error, $N_\theta=N_\phi$ is chosen
to be $4N$. Thus $N_S=16N^2$. The total number of discretization
points in time is $N_T=2000$.}
\label{tab8.1}
\end{table}

\begin{table}[ht]
\center\begin{tabular}{|c|c|c|c|c|c|c|}
\hline
$N_T$ & 250 &  500 & 750& 1000&1500&2000\\
\hline
$E$ & 0.19E+00& 0.12E-03&  0.15E-05&  0.30E-07& 0.47E-10&0.88E-12\\
\hline
\end{tabular}
\caption{Relative $L^2$ error of the numerical solution of the Dirichlet 
problem as a function of the total number of discretization points in time. 
Here, the spherical harmonic expansion order was set to $125$ and the total
of number of discretization points on the unit sphere is $N_S=250000$. The
order of integration for temporal convolution is fixed at $p=10$.}
\label{tab8.2}
\end{table}

\begin{table}[ht]
\center\begin{tabular}{|c|c|c|c|c|c|c|}
\hline
$N_S$ & 102400& 129600&  160000 &193600 & 230400 &  270400\\
\hline
$N$ & 80 &     90 &    100 &    110 &    120 &    130\\
\hline
$E$ & 0.84E-01 & 0.65E-03 & 0.12E-05 & 0.64E-09 & 0.71E-12& 0.70E-12\\
\hline
\end{tabular}
\caption{Relative $L^2$ error of the numerical solution of the Robin
problem 
with increasing spherical harmonic expansion order $N$. $N_S$ is the total
number of discretization on the unit sphere. Since the discretization error
is usually greater than the truncation error, $N_\theta=N_\phi$ is chosen
to be $4N$. Thus $N_S=16N^2$. The total number of discretization
points in time is $N_T=2000$.}
\label{tab8.3}
\end{table}

\begin{table}[ht]
\center\begin{tabular}{|c|c|c|c|c|c|c|}
\hline
$N_T$ & 250 &  500 & 750& 1000& 1250&1500\\
\hline
$E$ & 0.92E-02&  0.13E-05 & 0.41E-07&  0.15E-08 & 0.58E-10& 0.33E-11\\ 
\hline
\end{tabular}
\caption{Relative $L^2$ error of the numerical solution of the Robin
problem as a function of the total number of discretization points in time. Here, the
spherical harmonic expansion orer is $125$ and the total
of number of discretization points on the unit sphere is $N_S=250000$. The
order of integration for temporal convolution is $p=10$.}
\label{tab8.4}
\end{table}

\begin{figure}[!ht]
\centerline{\includegraphics[height=2in]
{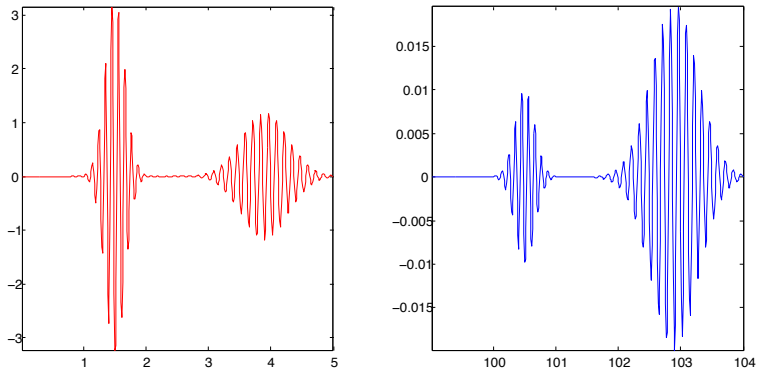}} 
\caption{The left-hand plot shows the value of the boundary data
at the north pole of the 
unit sphere as a function of time, and the right-hand plot shows the 
solution at the north pole of the outer sphere of radius $r=100$.
The exact solution is of the same form as (\ref{uexact}) - that is,
induced by two sources in the interior of the unit sphere.
\label{fig8.2}}
\end{figure}

\begin{figure}[!ht]
\centerline{\includegraphics[height=2in]
{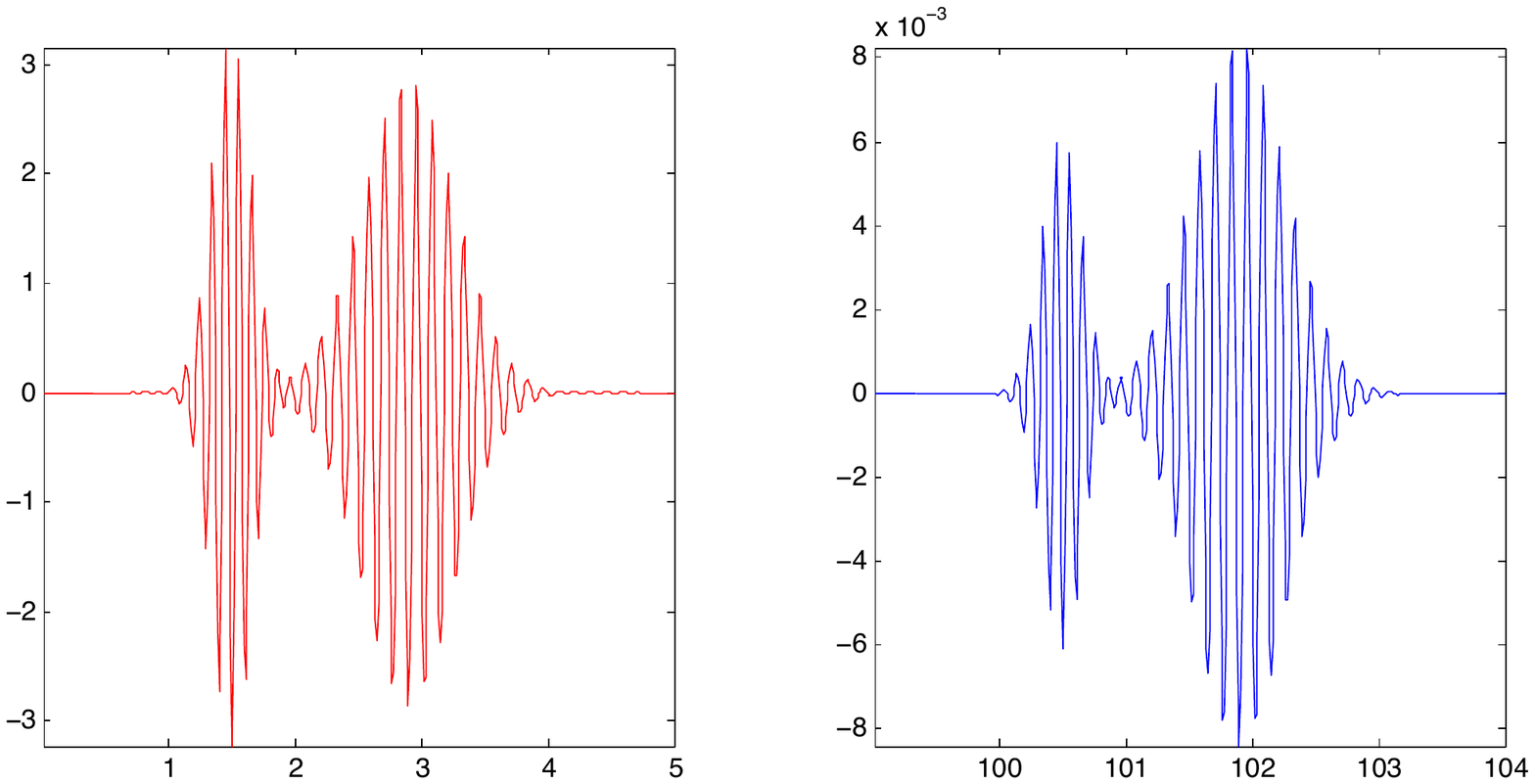}} 
\caption{The analog of Fig. \ref{fig8.2} for a ``true" scattering problem.
Dirichlet boundary conditions are generated by
two exterior sources placed on the $z$-axis, at $(0,0,1.3)$ and 
$(0,0,1.7)$.
The left-hand plot shows the value of the boundary data at the north pole of the 
unit sphere as a function of time, and the right-hand plot shows the 
solution at the north pole of the outer sphere of radius $r=100$.
\label{fig8.3}}
\end{figure}

\begin{figure}[!ht]
\centerline{\includegraphics[height=2in]
{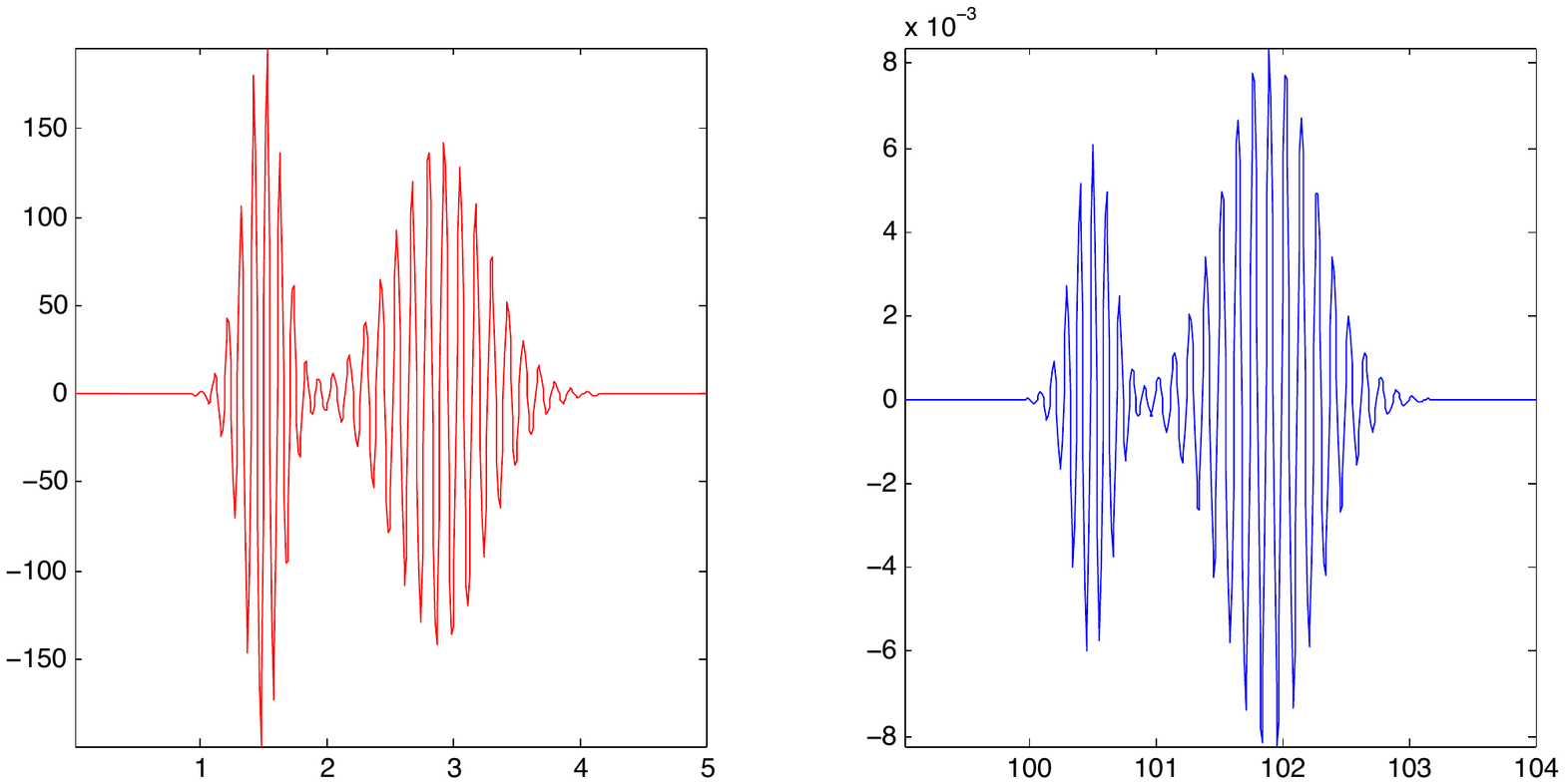}} 
\caption{The analog of Fig. \ref{fig8.2} for a ``true" scattering problem.
Robin boundary conditions are generated by
two exterior sources placed on the $z$-axis, at $(0,0,1.3)$ and 
$(0,0,1.7)$.
The left-hand plot shows the value of the boundary data at the north pole of the 
unit sphere as a function of time, and the right-hand plot shows the 
solution at the north pole of the outer sphere of radius $r=100$.
\label{fig8.4}}
\end{figure}

\begin{figure}[!ht]
\centerline{\includegraphics[height=2in]
{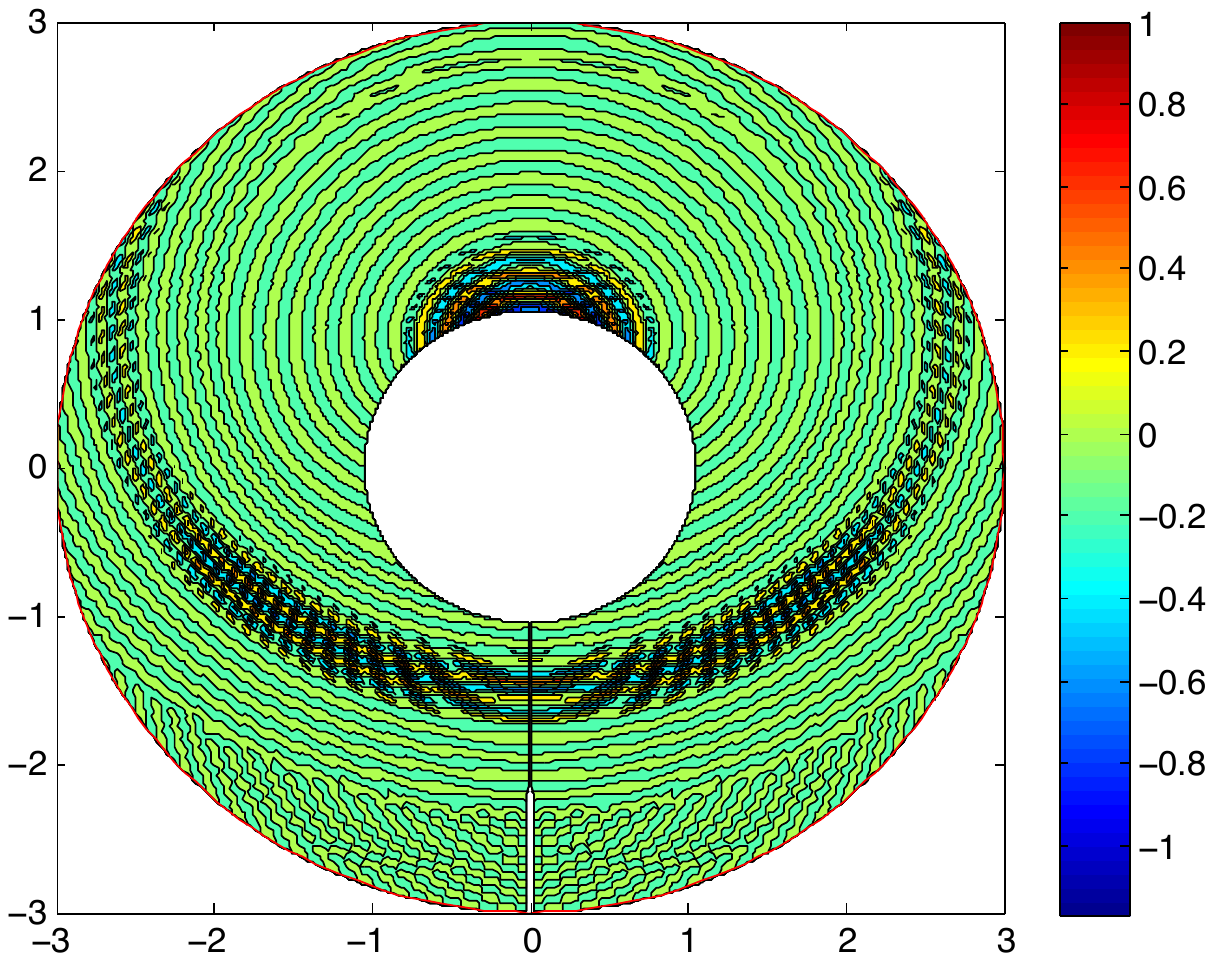}} 
\caption{We plot the solution of the field scattered by the unit sphere
in the $xz$-plane within the annular region $1 < r < 3$ at $t=4$.
with boundary data as in Fig. \ref{fig8.3}.
Note that the domain is approximately 50 wavelengths in size.
\label{fig8.5}}
\end{figure}

\section{Conclusions}
We have presented an analytic solution for the scalar wave equation 
in the exterior of a sphere in a form that is numerically tractable and
permits high order accuracy even for objects many wavelengths in size.
Aside from its intrinsic interest in single or multiple scattering from
a collection of spheres, our algorithm provides a useful reference solution
for any numerical method designed to solve problems of exterior scattering.
At the present time, such codes are typically tested by Fourier transformation
after a long-time simulation and comparison with a set of single frequency
solutions computed by separation of variables applied to the Helmholtz equation.

\begin{remark}
An exception is the work of Sauter and Veit
\cite{SAUTERVEIT}, who make use of a formulation equivalent to that of Wilcox
to develop a benchmark solution for a time-domain integral equation 
solver which can be applied to scattering from general geometries.
Exponential ill-conditioning is avoided by considering only
low-order spherical harmonic expansions. 
Recently, Grote and Sim \cite{GROTESIM} 
have also used an approach based on the local exact radiation boundary conditions
proposed in \cite{HHex} to develop 
a new hybrid asymptotic/finite difference formalism for multiple scattering 
in the time domain. The advantage of the Grote-Sim method is that spherical harmonic transformations
are unnecessary and the evaluation formulas can be localized in angle. However, 
they also restrict their attention to low-order expansions, and our
preliminary experiments using their formulas indicate a loss of conditioning for $n$ large.
(The loss of conditioning presumably also applies to the radiation boundary conditions in \cite{HHex}.)
The method developed here should be of immediate use in both contexts
\end{remark}

As implemented above, our algorithm has $O(N^3N_T)$ complexity. It is possible,
however, to reduce the cost to $O(N^2 \log N N_T)$. This requires
the use of a fast spherical harmonic transform
(see, for example, \cite{tygert1} and references therein). 
With this fast algorithm, the cost
of each spherical harmonic transform is reduced from $O(N^3)$ to
$O(N^2 \log N)$. 
Second, we believe that the convolution kernels can be compressed 
as in \cite{alpert}, so that they involve only $O(\log n)$ 
modes for each $n$ for a given precision. We note that compressions  
for $n=64$ and various radii are reported in \cite{BFL-QG}, both for the
scalar wave equation considered here (which they call the flat-space wave equation)
and for wave equations with Zerilli and Regge-Wheeler potentials. In the
latter cases, compressed kernels are also constructed for smaller values of $n$,
as the exact kernels do not have rational transforms. Tabulated coefficients
required for implementing the compressed kernels 
may be found online \cite{Compressed}.

For the extension of the present approach to the full
Maxwell equations, see \cite{GHJ2}. 
Software implementing the algorithm of the present paper will be made available upon request.

\section{Appendix: asymptotic analysis of exponential growth of the
coefficients $a_{n,j}$ in \eqref{eq2.10}}

An alternative analysis of the instability phenomenon can be carried
out using the uniform asymptotic expansions of the Bessel functons due
to Olver \cite{olver}. We first recall the relationship between $K_{n+1/2}$
and the Hankel function, $H_{n+1/2}^{(1)}$:
\begin{equation}
K_{n+1/2}(z) = \frac {\pi i}{2} e^{i(n+1/2)\pi} H_{n+1/2}^{(1)} (iz) .
\end{equation}
Thus the residues we wish to estimate are given by
\begin{equation}
a_{n,j}(r) = \sqrt{r} e^{(r-1) \alpha_{n,j}} 
\frac {K_{n+1/2}(\alpha_{n,j} r)}{K_{n+1/2}'(\alpha_{n,j})} = -i \sqrt{r} 
e^{(r-1) \alpha_{n,j}}
\frac {H_{n+1/2}^{(1)}(i \alpha_{n,j} r)}{H_{n+1/2}^{(1) \ \prime}(i\alpha_{n,j})} .
\end{equation} 
To approximate these for $n \gg 1$ we use (see \cite{olver}):
\begin{eqnarray}
H_{n+1/2}^{(1)} \left( \left( n+ \frac {1}{2} \right) w \right) \sim & &
\label{asH} \\ 2e^{- \pi i/3} \left( n + \frac {1}{2} \right)^{-1/3} 
\left( \frac {4 \zeta}{1-w^2} \right)^{1/4} {\rm Ai} \left( e^{2 \pi i/3} \left(
n + \frac {1}{2} \right)^{2/3} \zeta \right) , & & \nonumber \\
H_{n+1/2}^{(1) \ \prime} \left( \left( n+ \frac {1}{2} \right) w \right) \sim & &
\label{asdH} \\ \frac {4 e^{- 2 \pi i/3}}{w} 
\left( n + \frac {1}{2} \right)^{-2/3} 
\left( \frac {4 \zeta}{1-w^2} \right)^{-1/4} {\rm Ai} ' 
\left( e^{2 \pi i/3} \left(
n + \frac {1}{2} \right)^{2/3} \zeta \right) , & & \nonumber
\end{eqnarray}
which hold uniformly in $\arrowvert {\rm arg}(w) \arrowvert < \pi - \delta$;
thus in particular they hold in $\Re z < 0$ where we will be using them. 
Here $\zeta$ is given by (\ref{eq6.4}) with the replacement $z=i w$.

To proceed we recall the basic properties of the Airy function, ${\rm Ai}(y)$,
which are listed in the Appendix of \cite{olver} as well as
\cite[Ch. 10]{handbook}:
\begin{description}
\item[i.] ${\rm Ai}(y)$ has infinitely many zeros which lie on the negative real
axis. For $j$ large the jth zero, $a_j$, of ${\rm Ai}(y)$ satisfies (\ref{eq6.3})
and the derivative satisfies
\begin{equation}
{\rm Ai} '(a_j) \sim (-1)^{j-1} \frac {1}{\sqrt{\pi}} \left( \frac {3}{2} \pi 
j \right)^{1/6} .
\end{equation}
\item[ii.] For $\arrowvert {\rm arg}(y) \arrowvert < \pi$ the function 
${\rm Ai}(y)$ satisfies the asymptotic formula
\begin{equation}
{\rm Ai}(y) \sim \frac {1}{2 \sqrt{\pi}} y^{-1/4} e^{- \frac {2}{3} y^{3/2}} , \ \
\arrowvert y \arrowvert \gg 1 . \label{Aiasy}
\end{equation}
\end{description} 

Using (i.) we deduce that the poles, $\alpha_{n,j}$, are asymptotically 
given by (\ref{eq6.1}) and approximately lie on the curve $nz(t)$ where $z(t)$
is defined in (\ref{eq6.5}). This is the curve for which 
$e^{2 \pi i/3} \zeta(z)$ is real and negative.   

To evaluate the residues we must calculate using (\ref{asH}),(\ref{asdH})
\begin{eqnarray}
a_{n,j} & \sim & \sqrt{r} e^{(r-1) \alpha_{n,j}} \alpha_{n,j} \left( n + \frac {1}{2}
\right)^{1/3} \left( \frac {\zeta \ \zeta_r}{ (1+ \alpha_{n,j}^2 r^2 )
(1+ \alpha_{n,j}^2 )} \right)^{1/4} \nonumber \\ & & \times
\frac {{\rm Ai} \left( e^{2 \pi i/3} \left(
n+ \frac {1}{2} \right)^{2/3} \zeta_r \right)}{{\rm Ai}'(a_j)} ,
\end{eqnarray}
where we have introduced
\begin{equation}
\zeta_r = \zeta( \alpha_{n,j} r ) .
\end{equation}
Obviously the scaling $z \rightarrow rz$ moves $\zeta_r$ off the curve
where the argument of the Airy function is real and negative. Thus using
(\ref{Aiasy}) and (\ref{eq6.4}) we deduce that the asymptotic formula for
$a_{n,j}$ contains an exponential term
\begin{eqnarray}
a_{n,j} & \propto & e^{(r-1) \alpha_{n,j}- \frac {2}{3} (n+1/2) \zeta_r^{3/2}} 
\label{resexp} \\
& = & \exp\left[ \left( n+ \frac {1}{2} \right) \eta_{n,j} (r) \right]
\nonumber
\end{eqnarray}
where
\begin{equation}
\eta_{n,j}(r)=\left( (r-1) 
\tilde{\alpha}_{n,j} -  
\ln\left( \frac {1 + \sqrt{ 1 + \tilde{\alpha}_{n,j}^2 r^2 }}
{i \tilde{\alpha}_{n,j} r} \right) + \sqrt{ 1 + \tilde{\alpha}_{n,j}^2 r^2} 
\right) . \label{etadef}
\end{equation}
Here we have introduced $\tilde{\alpha}_{n,j}=\left(n + \frac {1}{2} \right)^{-1}
\alpha_{n,j}$. 

Finally, we consider the real part of the expression in
parentheses on the second line of (\ref{resexp}). 
In particular we 
replace $\tilde{\alpha}_{n,j}$ by a continuous variable $\alpha$ traversing
the scaled curve, $z(t)$, containing the approximate zeros. Then the function
$\eta$ depends only on $r$ and the coordinate describing the curve; in
particular it is independent of $n$ and $j$. In Fig. \ref{resasy2}
we plot the real part of $\eta$ scaled by $\log_{10}{e}$ for $r=2$. 
This can be compared with Fig. \ref{fig2.2} by scaling both axes by $n=80$ and
recognizing the vertical axis as the base ten logarithm. We then observe  
good agreement with the numerical results. The maximum value plotted in
Figure \ref{resasy2} is approximately $.13$, which is the predicted slope
of the straight line plotted in Fig. \ref{fig2.1}. Again the agreement is good. We
note that increasing $r$ makes the problem somewhat worse; the scaled
maximum real part is approximately $.23$ for $r=5$ and $.27$ for $r=20$. 

\begin{figure}[!ht]
\centerline{\includegraphics[height=2in]
{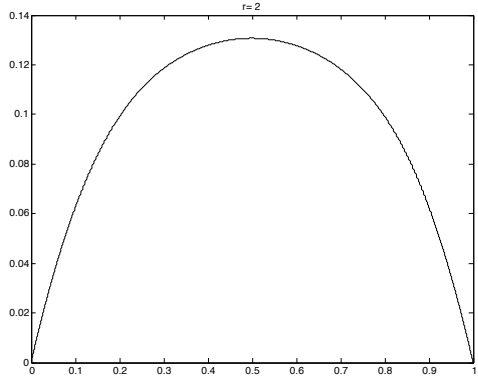}} 
\caption{\emph{Base 10 exponent of the exponential part of the asymptotic
formula for the residue scaled by $n+1/2$.}}\label{resasy2}
\end{figure}


\begin{thebibliography}{10}
\bibitem{handbook} 
M. Abramowitz and I. Stegun, \emph{Handbook of Mathematical Functions},
Dover, New York, 1965.

\bibitem{alpert}
B. Alpert, L. Greengard, and T. Hagstrom, 
\emph{Rapid evaluation of nonreflecting boundary 
kernels for time-domain wave propagation},
SIAM J. Numer. Anal. {\bf 37} (2000), 1138--1164.

\bibitem{BFL-QG}
A. Benedict, S. Field, and S. Lau,
\emph{Fast evaluation of asymptotic waveforms from gravitational perturbations},
Class. Quantum Grav. {\bf 30} (2013), 055015. 

\bibitem{vsh}
E. Carrascal, G. A. Estevez, P. Lee, and V. Lorenzo,
\emph{Vector spherical harmonics and their application to
classical electrodynamics}, 
Eur. J. Phys. {\bf 12} (1991), 184-191.

\bibitem{CH}
{R. Courant and D. Hilbert} (1953),
{\em Methods  of Mathematical Physics},
Interscience Publishers, New York.

\bibitem{epstein}
C. Epstein and L. Greengard,
\emph{Debye Sources and the Numerical Solution
of the Time Harmonic Maxwell Equations},
Comm. Pure Appl. Math. {\bf 63} (2010), 413--463.

\bibitem{GHJ2}
L. Greengard, T. Hagstrom, and S. Jiang, 
\emph{Extension of the Debye-Mie-Lorenz formalism to the time domain}
in preparation.

\bibitem{GROTESIM}
M. Grote and I. Sim, 
\emph{Local nonreflecting boundary condition for time-dependent multiple scattering},
J. Comput. Phys. {\bf 230} (2011), 3135--3154.

\bibitem{HHex}
T. Hagstrom and S. Hariharan,
\emph{A formulation of asymptotic and exact boundary 
conditions using local operators},
Appl. Numer. Math. {\bf 27} (1998), 403--416. 

\bibitem{jiang1}
S. Jiang, 
\emph{Fast Evaluation of the Nonreflecting Boundary Conditions for
the Schr\"{o}dinger Equation}, 
Ph.D. thesis, Courant Institute of Mathematical Sciences, 
New York University, New York, 2001.

\bibitem{MF}
{P. Morse and H. Feshbach} (1953),
{\em Methods  of Theoretical Physics},
McGraw-Hill, New York.

\bibitem{olver}
F. W. Olver, \emph{The asymptotic expansion of Bessel functions of large order},
Philo. Trans. Roy. Soc. London {\bf A247} (1954), 328-368.

\bibitem{SAUTERVEIT}
S. Sauter and A. Veit, 
{\em A Galerkin method for retarded boundary integral
equations with smooth and compactly supported
temporal basis functions}
Numer. Math. (2013) {\bf 123}, 145--176.

\bibitem{tokita}
T. Tokita, \emph{Exponential decay of solutions for the wave equation 
in the exterior domain with spherical boundary},
J. Math. Kyoto Univ. {\bf 12-2} (1972), 413-430.

\bibitem{tygert1}
M. Tygert, \emph{Fast algorithms for spherical harmonic expansions, III},
J. Comput. Phys. {\bf 229} (2010), no. 18, 6181--6192.

\bibitem{tygert2}
M. Tygert, \emph{Recurrence relations and fast algorithms},
Appl. Comput. Harmon. Anal. {\bf 28} (2010), no. 1, 121--128.
 
\bibitem{wilcox}
C. H. Wilcox, \emph{The initial-boundary value problem for the wave equation
in an exterior domain with spherical boundary}, 
Notices Amer. Math. Soc. {\bf 6} (1959), 869-870.

\bibitem{Compressed}
Tabulated values for the compressed kernels discussed in the conclusions can be obtained
from the website: {\tt http://www.dam.brown.edu/people/sfield/KernelsRWZ/}.

\end{thebibliography}
\end{document}